\theoremstyle{plain}
\newtheorem{thm}{Theorem}
\newtheorem{eg}{Example}
\newtheorem{case}{Case}
\newtheorem{scase}{Sub-case}
\newtheorem{corollary}{Corollary}[section]
\theoremstyle{definition}
\theoremstyle{remark}
\newtheorem{remark}{Remark}
\numberwithin{equation}{section}
\numberwithin{lemma}{section}
\numberwithin{thm}{section}
\begin{document} 

\title{ Generalized Fruit Diophantine equation and Hyperelliptic curves}
\author{Om Prakash and Kalyan Chakraborty}
\address{Kerala School of Mathematics, Kozhikode - 673571, Kerala, India.}
\email{omprakash@ksom.res.in}
\address{Kerala School of Mathematics, Kozhikode - 673571, Kerala, India.}
\email{kalychak@ksom.res.in}
\keywords{Diophantine equation, Quadratic residue, Elliptic curves, Hyperelliptic curves.}
\subjclass[2010] {Primary: 11D41, 11D72. Secondary: 11G30}
\maketitle

\begin{abstract}
We show the insolvability of the Diophantine equation $ax^d-y^2-z^2+xyz-b=0$ in $\mathbb{Z}$ for fixed $a$ and $b$ such that $a\equiv 1 \pmod {12}$ and $b=2^da-3$, where $d$ is an odd integer and is a multiple of $3$. Further, we investigate the more general family with $b=2^da-3^r$, where $r$ is a positive odd integer. As a consequence, we found an infinite family of hyperelliptic curves with trivial torsion over $\mathbb{Q}$. We conclude by providing some numerical evidence corroborating the main results. 
\end{abstract}

\section{Introduction}
One of the earliest topics in number theory is the study of Diophantine equations. In the third century, Greek mathematician Diophantus of `Alexandria' began this study. A polynomial equation of the form 
$$
P(x_1,x_2,\cdots,x_n)=0
$$ 
is known as a Diophantine equation. Finding all of its integer solutions, or all of the $n-$tuples $\left(x_1,x_2,\cdots,x_n\right)\in \mathbb{Z}$ that satisfy the above equation, is of prime interest. The main task is to investigate whether solutions exist for a given Diophantine equation. If they do, it would be the aim to know how many are there and how to find all. There are certain Diophantine equations which has no non zero integer solutions, for example, Fermat's equation $x^n+y^n=z^n$ for $n\geq 3$. The tenth of Hilbert's 23 problems, which he presented in 1900, dealt with Diophantine equations. Hilbert asked, is there an algorithm to determine weather a given Diophantine equation has a solution or not? and Matiyasevich in 1970 answered it negatively.

We investigate a class of Diophantine equations of the form $ax^d-y^2-z^2+xyz-b=0$ for fixed $a$ and $b$. Due to its emergence when attempting to solve an equation involving fruits, this type of Diophantine equations were given the name ``Fruit Diophantine equation" by B. Sury and D. Majumdar \cite{sury} and they proved the following:
\begin{thm} \cite{sury} \label{s1}
The equation 
$$
y^2-xyz+z^2=x^3-5
$$ 
has no integer solution in $x$, $y$ and $z$.
\end{thm}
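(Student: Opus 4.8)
The plan is to combine elementary congruences with a $2$-adic descent. Set $Q(y,z)=y^{2}-xyz+z^{2}$, a binary quadratic form of discriminant $x^{2}-4$. For $|x|\le 1$ the form $Q$ is positive definite while $x^{3}-5<0$, and for $x=\pm 2$ one has $Q(y,z)=(y\mp z)^{2}$ while $x^{3}-5\in\{3,-13\}$ is not a square; hence any integral solution has $|x|\ge 3$. Reducing modulo $4$ shows $x$ is odd, since for even $x$ one has $x^{3}-5\equiv 3\pmod 4$, a value not taken by $y^{2}-xyz+z^{2}$. Multiplying by $4$ and completing the square rewrites the equation as
$$(2y-xz)^{2}=(x^{2}-4)z^{2}+4(x^{3}-5),$$
and reading this modulo $8$ — where $x$ odd forces $x^{2}-4\equiv 5$ and $4(x^{3}-5)\equiv 0$ — an odd $z$ makes the right side $\equiv 5\pmod 8$ while the left side, an odd square, is $\equiv 1$. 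So $z$, and by the $y\leftrightarrow z$ symmetry also $y$, is even.

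First I would run the descent this produces: writing $y=2y_{1},\,z=2z_{1}$ gives $y_{1}^{2}-xy_{1}z_{1}+z_{1}^{2}=(x^{3}-5)/4$, whence $x\equiv 1\pmod 4$, and whenever the constant on the right is still even the same modulo-$8$ argument extracts another common factor $2$ from the two variables. A short bookkeeping argument shows this can terminate only if $v_{2}(x^{3}-5)$ is even; after $v_{2}(x^{3}-5)/2$ steps one is left with integers $Y,Z$ satisfying $Y^{2}-xYZ+Z^{2}=M$, equivalently $(2Y-xZ)^{2}=(x^{2}-4)Z^{2}+4M$, where $M$ denotes the odd part of $x^{3}-5$. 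Reducing the original equation modulo $3$, using $x^{3}-5\equiv x+1\pmod 3$ and that $y^{2}-xyz+z^{2}$ never represents $2$ modulo $3$ when $x\equiv 1$, gives $x\not\equiv 1\pmod 3$.

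Next I would pin $x$ down modulo $9$. Since $v_{2}(x^{3}-5)$ is even, $4M$ is a unit modulo $9$ when $3\nmid x^{3}-5$, and $4M\equiv\pm 3\pmod 9$ when $3\mid x^{3}-5$; feeding this, along with the value of $v_{3}(x^{2}-4)$ (which is $0$, $1$, or at least $2$), into $(2Y-xZ)^{2}=(x^{2}-4)Z^{2}+4M$ modulo $9$ shows that the right side is a non-square modulo $9$ whenever $x\equiv 2$ or $5\pmod 9$. Together with $x\not\equiv 1\pmod 3$ this leaves only $x\equiv 0\pmod 3$ or $x\equiv 8\pmod 9$. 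Finally, for any prime $p\mid x-2$ with $p\ne 3$, reduction modulo $p$ gives $(2Y-xZ)^{2}\equiv 4M\pmod p$; as $x\equiv 2\pmod p$ we get $x^{3}-5\equiv 3$, and since $v_{2}(x^{3}-5)$ is even this forces $3$ to be a quadratic residue modulo $p$, i.e.\ $p\equiv\pm 1\pmod{12}$. But with $x\equiv 1\pmod 4$ one computes $x-2\equiv 7\pmod{12}$ in the case $x\equiv 0\pmod 3$, and $(x-2)/3\equiv 5\pmod{12}$ in the case $x\equiv 8\pmod 9$ (where $v_{3}(x-2)=1$); since $x-2$, respectively $(x-2)/3$, is then a product of primes each $\equiv\pm 1\pmod{12}$ and hence itself $\equiv\pm 1\pmod{12}$, and $7,5\not\equiv\pm 1\pmod{12}$, we reach a contradiction. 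This proves the theorem.

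The crux is the interplay of the $2$-adic parity with the modulo-$9$ step: no single congruence rules out enough, but forcing $v_{2}(x^{3}-5)$ to be even and then $x\equiv 8\pmod 9$ (or $x\equiv 0\pmod 3$) is exactly what pushes $x-2$, respectively $(x-2)/3$, into a residue class modulo $12$ incompatible with being a product of primes $\equiv\pm 1\pmod{12}$. One also has to check that the final modulo-$p$ argument is genuinely uniform — it survives the degenerate possibilities $p\mid M$, $Z=0$, or $Y=0$ — which holds because $p\mid x-2$ with $p\ne 3$ already forces $p\nmid M$.
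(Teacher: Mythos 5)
Your argument is correct and follows essentially the same route as the paper's own proof (this statement is the case $a=1$, $d=3$, $b=5$ of Theorem \ref{t1}): rule out even $x$ modulo $4$; force $y,z$ even and $x\equiv 1\pmod 4$; eliminate $x\equiv 1\pmod 3$ and, via a mod-$9$ (in the paper, an iterated mod-$3$) computation, all of $x\equiv 2\pmod 3$ except $x\equiv 8\pmod 9$; and finally obtain a contradiction from a prime divisor $p$ of $x-2$, respectively $(x-2)/3$, modulo which $3$ would have to be a quadratic residue, against $x-2\equiv 7$ or $(x-2)/3\equiv 5\pmod{12}$. The only substantive difference is your $2$-adic descent to the odd part $M$ of $x^3-5$, which is harmless but unnecessary, since the congruences $(2y-xz)^2\equiv 12\pmod p$ and the mod-$9$ step already go through on the undivided equation.
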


 Similar type of equations were previously studied by F. Luca and A. Togb\'e. In particular, Luca and Togb\'e \cite{LT} studied the solution of the Diophantine equation $x^3+by+1-xyz=0$ and later, Togb\'e \cite{T} independently studied the equation $x^3+by+4-xyz=0$. 
 
As a consequence of Theorem \ref{s1} Majumdar and Sury proved the following:
\begin{thm} \cite{sury}
For any integer $m$, the elliptic curve 
$$
E_m: y^2-mxy=x^3+m^2+5
$$ 
has no integral point. 
\end{thm}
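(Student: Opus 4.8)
The plan is to obtain this as a one-parameter specialization of Theorem~\ref{s1}. The surface $y^{2}-xyz+z^{2}=x^{3}-5$ has no integral points, and an integral point on $E_m$ together with the choice $z=m$ should produce an integer triple on that surface; a contradiction with Theorem~\ref{s1} would then finish the proof. So I would begin by assuming $(x,y)\in\mathbb{Z}^{2}$ satisfies $y^{2}-mxy=x^{3}+m^{2}+5$ and attempt to read off from it a solution of the fruit equation.

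Executing the specialization is exactly where the difficulty lies. Substituting $z=m$ into $y^{2}-xyz+z^{2}=x^{3}-5$ yields $y^{2}-mxy+m^{2}=x^{3}-5$, that is $y^{2}-mxy=x^{3}-m^{2}-5$. Hence Theorem~\ref{s1} forbids integral points only on the curve $y^{2}-mxy=x^{3}-m^{2}-5$, where the $m^{2}$ and the constant $5$ both occur with a minus sign. The curve $E_m$ as printed has $+m^{2}+5$ and is therefore the slice $z=m$ of $y^{2}-xyz+z^{2}=x^{3}+5$, whose $+5$ on the right is not what Theorem~\ref{s1} controls; the specialization argument simply does not reach the stated equation.

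The main obstacle is thus not a matter of sharpening an estimate but a genuine sign obstruction, and it is fatal to the statement as worded. The equation $y^{2}-xyz+z^{2}=x^{3}+5$ is solvable, for example $(x,y,z)=(-1,\pm2,0)$, and the corresponding points $(-1,\pm2)$ lie on $E_0\colon y^{2}=x^{3}+5$. So $E_m$ as written already carries an integral point at $m=0$, and no route through Theorem~\ref{s1}, or through any other argument, can prove the assertion for every integer $m$. I would record that the displayed signs are a misprint: what Theorem~\ref{s1} actually yields, via the substitution $z=m$ and in a single line, is the emptiness of integral points on $y^{2}-mxy=x^{3}-m^{2}-5$ for all $m\in\mathbb{Z}$.
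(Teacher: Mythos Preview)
Your analysis is correct, and there is nothing in the present paper to compare against: the theorem is merely quoted from \cite{sury} and is not proved here. Your specialization $z=m$ in Theorem~\ref{s1} is exactly the intended derivation, and it indeed produces
\[
y^{2}-mxy=x^{3}-m^{2}-5,
\]
not $y^{2}-mxy=x^{3}+m^{2}+5$. Your counterexample at $m=0$ is valid: $(-1,\pm 2)$ lies on $y^{2}=x^{3}+5$, so the displayed equation for $E_m$ cannot be integrally empty for all $m$. The conclusion that the signs in the display are a misprint, and that the genuine corollary of Theorem~\ref{s1} is the absence of integral points on $y^{2}-mxy=x^{3}-m^{2}-5$ for every integer $m$, is the right one.
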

L. Vaishya and R. Sharma expanded on Majumdar and Sury's work in \cite{lalit}. A class of fruit Diophantine equations without an integer solution was found by them. In particular Vaishya and Sharma showed,
\begin{thm} \cite{lalit} \label{l1}
For fixed integers $a$ and $b$ with $a\equiv 1 \pmod {12}$ and $b=8a-3$. The Diophantine equation $$ax^3-y^2-z^2+xyz-b=0$$ has no integer solution. 
\end{thm}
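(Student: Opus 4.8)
The plan is to suppose an integral solution $(x,y,z)$ exists, abbreviate $C:=ax^3-b=a(x^3-8)+3$, and first recast the equation in a more tractable shape. Writing $P:=y-z$, $S:=y+z$, the relation $y^2+z^2-xyz=C$ is equivalent to
$$(x+2)P^2-(x-2)S^2=4C,\qquad P\equiv S\pmod 2,$$
the converse being $y=(S+P)/2$, $z=(S-P)/2$. The cases $|x|\le 2$ fall immediately: $x=2$ forces $(y-z)^2=3$; $x=-2$ forces $(y+z)^2=3-16a\equiv 3\pmod{16}$; $x=0$ forces $y^2+z^2=3-8a\equiv 3\pmod 8$; and $x=\pm1$ force $y^2\mp yz+z^2$ to equal a residue ruled out mod $3$ (resp. mod $4$). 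So from now on $|x|\ge 3$.

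Next I would pin down $x$ modulo $12$. Reducing $(x+2)P^2-(x-2)S^2=4C$ modulo $16$ kills every even $x$: when $x$ is even one has $4C\equiv 12\pmod{16}$, while checking the possibilities for $(P,S)\bmod 2$ (odd squares being $\equiv 1\pmod 8$) never produces $12$. For $x$ odd, $8\mid 4C$ forces $P,S$ both even; cancelling $4$ gives $(x+2)P_1^2-(x-2)S_1^2=C$, whose left side is never $\equiv 2\pmod 4$, whereas $a\equiv 1\pmod 4$ makes $C\equiv x+3\pmod 4$, which is $\equiv 2$ when $x\equiv 3\pmod4$; hence $x\equiv 1\pmod 4$. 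Independently, reducing the original equation modulo $3$ (using $b\equiv 2$) shows $x\not\equiv 1\pmod 3$. Thus $x$ is odd, $x\equiv 1\pmod 4$, and $x\equiv 0$ or $2\pmod 3$.

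The crux uses $x^3-8=(x-2)(x^2+2x+4)$. Reducing the original equation modulo $x-2$ gives $(y-z)^2\equiv C\equiv 3\pmod{x-2}$, so $3$ must be a square modulo $x-2$. If $x\equiv 0\pmod 3$ then $3\nmid x-2$, and a quadratic reciprocity computation on $|x-2|$ (giving $\big(\tfrac3{x-2}\big)=1\iff x-2\equiv\pm1\pmod{12}$) is incompatible with $x\equiv 1\pmod4$ and $x\equiv0\pmod3$. Hence $x\equiv 2\pmod 3$, so $3\mid x-2$; since $3$ is not a square mod $9$, in fact $v_3(x-2)=1$, say $x-2=3m$ with $\gcd(m,6)=1$ and, from $x\equiv1\pmod4$, $m\equiv 1\pmod 4$. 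Now $(y-z)^2\equiv 3\pmod m$ gives $\big(\tfrac3m\big)=1$, so reciprocity forces $m\equiv 1\pmod 3$. On the other hand, $x\equiv 2\pmod 3$ makes $x^3\equiv 8\pmod 9$, hence $C\equiv 3\pmod 9$; reducing $(x+2)P_1^2-(x-2)S_1^2=C$ modulo $3$ forces $3\mid P_1$, and then reducing modulo $9$ forces $mS_1^2\equiv 2\pmod 3$, whence $m\equiv 2\pmod 3$. This contradicts $m\equiv 1\pmod 3$, completing the argument.

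The step I expect to be the main obstacle is this final collision: extracting from the $\bmod 9$ reduction the congruence $m\equiv 2\pmod 3$ and recognizing that it clashes with the residue class of $m$ modulo $3$ that is dictated by $\big(\tfrac3m\big)=1$ and $m\equiv 1\pmod4$. A secondary care point is keeping track of signs when invoking Jacobi reciprocity (one works with $|x-2|$ and $|m|$), and checking that the degenerate values $|x-2|,|m|=1$ do not survive the earlier reductions; everything else is systematic congruence bookkeeping.
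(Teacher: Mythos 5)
Your proof is correct, and it is essentially the paper's own argument (the $d=3$ case of Theorem~\ref{t1}) in different clothing: the same 2-adic reduction pinning $x\equiv 1\pmod 4$, the same mod~$3$ exclusion of $x\equiv 1\pmod 3$, and the same crux that $(y-z)^2\equiv 3$ modulo $x-2$ (with the mod~$9$ refinement when $3\mid x-2$) is obstructed because $3$ is a non-residue there. The only differences are presentational: you diagonalize via $P=y-z$, $S=y+z$ to make the factors $x\pm 2$ explicit and invoke Jacobi reciprocity on $|x-2|$ and $|m|$, where the paper completes the square in $y$ and instead exhibits a prime factor $p\equiv 5,7\pmod{12}$ of $\alpha-2$ (or of $(\alpha-2)/3$) at which $3$ is a non-residue.
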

Using Nagell-Lutz theorem \cite{sil} and Theorem \ref{l1} they got hold of an infinite family of elliptic curves with torsion-free Mordell-Weil group over $\mathbb{Q}$. 
\begin{thm} \cite{lalit}
Let $a$ and $b$ be as in Theorem \ref{l1}. 
\begin{itemize}
\item For any even integer $m$ the elliptic curve 
$$
E_{m,a,b}^e: y^2=x^3+\frac{1}{4}m^2x^2-a^2\left(m^2+b\right)
$$ 
has torsion-free Mordell-Weil group. 
\item For any odd integer $m$ the elliptic curve $$E_{m,a,b}^o: y^2=x^3+m^2x^2-64a^2\left(m^2+b\right)$$ has torsion-free Mordell-Weil group. 
\end{itemize}
\end{thm}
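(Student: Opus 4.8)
The plan is to derive this from Theorem~\ref{l1} through the Nagell--Lutz theorem. The first step is to exhibit the substitution that links the fruit-equation surface to these Weierstrass curves. Fixing $z=m$ in $ax^3-y^2-z^2+xyz-b=0$ gives the plane cubic $C_m:\ y^2-mxy=ax^3-(m^2+b)$, and the change of variables $(X,Y)=(ax,\,a(y-\tfrac m2 x))$ — which is an integral substitution exactly when $m$ is even, and this is why the even and odd cases are treated separately — followed by completing the square turns $C_m$ into $Y^2=X^3+\tfrac14 m^2 X^2-a^2(m^2+b)$, i.e. $E^e_{m,a,b}$; for odd $m$ one clears denominators instead via $(X,Y)=(4ax,\,4a(2y-mx))$, which introduces the factor $64a^2$ and yields $E^o_{m,a,b}$. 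The upshot is: an integral point $(X_0,Y_0)$ on $E^e_{m,a,b}$ with $a\mid X_0$ and $a\mid Y_0$ (resp. on $E^o_{m,a,b}$ with $4a\mid X_0$, $4a\mid Y_0$ and the mild $2$-adic condition needed to invert the map) pulls back to an integral point of $C_m$, hence to an integral solution of the fruit equation with $z=m$; by Theorem~\ref{l1} there is none.

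Now assume for contradiction that $E^e_{m,a,b}$ (say) has a nontrivial torsion point $P$. Since $m$ even makes the displayed model an integral Weierstrass equation, the Nagell--Lutz theorem gives $P=(X_0,Y_0)$ with $X_0,Y_0\in\mathbb Z$ and either $Y_0=0$ or $Y_0^2\mid \Delta$. The heart of the proof is to upgrade this to the divisibilities $a\mid X_0$ and $a\mid Y_0$. For that I would reduce modulo a prime $p\mid a$: because $a\equiv 1\pmod{12}$ every such $p$ is $\ge 5$, and modulo $p$ the equation becomes $Y^2=X^2\bigl(X+\tfrac14 m^2\bigr)$, so $E^e_{m,a,b}$ has (split) multiplicative reduction with node at the origin; analysing the image of $E^e_{m,a,b}(\mathbb Q)_{\mathrm{tors}}$ in the component group and in $\widetilde E_{ns}(\mathbb F_p)\cong\mathbb F_p^{\times}$ should force $P$ to reduce to the singular point modulo every $p\mid a$, which is precisely $a\mid X_0$ and $a\mid Y_0$; the prime $2$ is handled the same way for $E^o_{m,a,b}$. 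Granting this, $P$ descends to an integral solution of the fruit equation, contradicting Theorem~\ref{l1}. The case $Y_0=0$ is no different: it forces $y=\tfrac m2 x$ on $C_m$, so once $a\mid X_0$ is known it again produces a fruit-equation solution. Hence $E^e_{m,a,b}(\mathbb Q)_{\mathrm{tors}}$ and $E^o_{m,a,b}(\mathbb Q)_{\mathrm{tors}}$ are trivial, i.e. the Mordell--Weil groups are torsion-free.

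I expect the divisibility step to be the main obstacle: making rigorous that each torsion point reduces to the singular point at every prime dividing $a$ (and at $2$ in the odd case). If that local computation proves awkward, a safe fallback is to first rule out rational $2$-torsion by showing the relevant monic cubic has no integer root via congruences exploiting $a\equiv 1\pmod{12}$ and $b=8a-3$, then bound $|E(\mathbb Q)_{\mathrm{tors}}|$ using reduction at one or two small primes of good reduction, and finally eliminate the few remaining cyclic possibilities once more through the map to $C_m$ and Theorem~\ref{l1}.
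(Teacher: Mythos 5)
First, a point of orientation: the paper itself does not prove this theorem --- it is quoted from \cite{lalit} --- so the only in-paper material to compare against is the attributed strategy (``Nagell--Lutz plus Theorem \ref{l1}'') and the proof of the hyperelliptic analogue, Theorem \ref{t2}. Your overall route coincides with both: fix $z=m$ in the fruit equation, complete the square, rescale to a monic integral model, use Nagell--Lutz to make torsion points integral, pull them back to integral solutions of the fruit equation, and contradict Theorem \ref{l1}. Your substitutions $(X,Y)=(ax,\,a(y-\tfrac m2 x))$ and $(X,Y)=(4ax,\,4a(2y-mx))$ are correct and reproduce $E^e_{m,a,b}$ and $E^o_{m,a,b}$ exactly. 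You also correctly put your finger on the one genuine subtlety, which the paper's own proof of Theorem \ref{t2} silently elides (it even writes the inverse substitution as $aX_0$ rather than $X_0/a$): an integral point $(X_0,Y_0)$ on the rescaled model only yields a \emph{rational} solution of the fruit equation unless one knows $a\mid X_0$ and $a\mid Y_0$ (with the corresponding $2$-power conditions in the odd case), and Theorem \ref{l1} excludes only \emph{integral} solutions.

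The problem is that the mechanism you propose for establishing these divisibilities does not work. At a prime $p\mid a$ the points of $E^e_{m,a,b}(\mathbb{Q}_p)$ that reduce to the node are precisely those \emph{outside} $E_0(\mathbb{Q}_p)$, and the standard theory says that prime-to-$p$ torsion contained in $E_0(\mathbb{Q}_p)$ injects into $\widetilde E_{ns}(\mathbb F_p)$ --- that is, it reduces to a \emph{nonsingular} point. So reduction theory gives no force whatever pushing a torsion point onto the singular point; a $2$-torsion point $(X_0,0)$ with $p\nmid X_0$ is entirely consistent with everything you invoke. To get what you want you would have to bound the order of $P$ away from $\#\widetilde E_{ns}(\mathbb F_p)$ and from the order of the component group, both of which vary with $a$, $b$ and $m$, and this is not available uniformly. (There is also the secondary issue that your model need not be minimal at $p\mid a$, so the reduction type is not even determined by reducing the displayed equation.) Your fallback --- kill $2$-torsion by congruences and bound the torsion by reduction at auxiliary good primes --- is a reasonable tactic for a single curve but does not close the argument uniformly in $m$ and $a$. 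As it stands, then, the ``heart of the proof'' is missing: what is needed is either a proof that every integral point of the monic model automatically satisfies $a\mid X_0$, $a\mid Y_0$ (for instance by a direct congruence analysis on $Y^2=X^3+\tfrac14 m^2X^2-a^2(m^2+b)$ mirroring the proof of Theorem \ref{l1}), or some other device for transporting integrality back to the unscaled curve $y^2-mxy=ax^3-(m^2+b)$.
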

We extend Vaishya and Sharma's results \cite{lalit} for higher exponents. We obtain a family of hyperelliptic curves, by carrying out some appropriate transformations. In 2013, D. Grant gave an analogue of Nagell-Lutz theorem for hyperelliptic curves \cite{david}, using which we conclude that the Mordell-Weil group of each member of the corresponding family of hyperelliptic curves is torsion-free. 
\section{Insolvability}
Here we state and prove the main theorem and derive a couple of interesting corollaries. We end this section by looking into a couple of examples.
\begin{thm} \label{t1}
The equation 
$$
ax^d-y^2-z^2+xyz-b=0
$$ 
has no integer solutions for fixed $a$ and $b$ such that $a\equiv 1 \pmod {12}$ and $b=2^da-3$, where $d$ is an odd integer and divisible by $3$.
\end{thm}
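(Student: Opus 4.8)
The plan is to follow the template of Theorem~\ref{l1}: a parity reduction that kills all but one configuration of $(x,y,z)$, followed by a quadratic‑residue obstruction for the surviving configuration, with the hypotheses ``$d$ odd'' and ``$3\mid d$'' inserted exactly where they are needed to make the higher power behave like the cubic one. Assume $(x,y,z)\in\mathbb Z^3$ satisfies $ax^d-y^2-z^2+xyz-b=0$. Since $a$ is odd and $b=2^da-3$ is odd, reduction modulo $2$ gives $x+y+z+xyz\equiv1\pmod2$, whose only solutions are: (i) $x$ odd with $y$ and $z$ both even; or (ii) $x$ even with exactly one of $y,z$ odd. In case (ii), reducing the equation modulo $4$ — using $4\mid 2^da$ (as $d\ge3$), $4\mid x^2$, $4\mid xyz$ and $b\equiv1\pmod4$ — forces $y^2\equiv3\pmod4$ (or $z^2\equiv3\pmod4$), which is impossible. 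So we are in case (i): writing $y=2Y$, $z=2Z$, substituting and dividing by $4$ gives
$$a\bigl(x^d-2^d\bigr)+3=4\bigl(Y^2-xYZ+Z^2\bigr).\qquad(\star)$$

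Next I would pin down $x$ modulo $12$. Since $x$ is odd and $d$ is odd, $x^d\equiv x\pmod4$, while $2^d\equiv0\pmod4$; together with $a\equiv1\pmod4$, reducing $(\star)$ modulo $4$ yields $x\equiv1\pmod4$. For the residue modulo $3$, reduce the original equation modulo $3$: using $a\equiv1$, $x^d\equiv x$ and $2^da\equiv2$ modulo $3$ (the first and last using that $d$ is odd), a solution with $x\equiv1\pmod3$ would force $y^2-xyz+z^2\equiv2\pmod3$, which is not a square modulo $3$; hence $x\not\equiv1\pmod3$. In particular $x$ is odd, $x\equiv1\pmod4$, $x\not\equiv1\pmod3$, so $|x|\ge3$.

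The decisive step is to view $(\star)$ (equivalently, the original equation) as a quadratic in $y$: its discriminant
$$D=(x^2-4)z^2+4a\bigl(x^d-2^d\bigr)+12$$
must be a perfect square. Here ``$3\mid d$'' enters: it gives the factorization $x^3-8=(x-2)(x^2+2x+4)\mid x^d-2^d$, together with the congruences $x^d\equiv x^3\pmod{72}$ and $2^d\equiv8\pmod{72}$, so that modulo $8$ and modulo $9$ the quantity $D$ agrees with its $d=3$ counterpart; moreover, since $2^da=b+3$ by the very definition of $b$, reducing modulo $x-2$ (where $x\equiv2$) gives $D\equiv12\pmod{x-2}$. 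Thus $3$ must be a quadratic residue modulo every odd prime divisor of $x-2$ and $9\nmid x-2$; combined with $x-2\equiv3\pmod4$ and $x\not\equiv1\pmod3$, and with the matching behaviour of $D$ modulo $72$, one then runs the quadratic‑reciprocity analysis of Vaishya--Sharma to a contradiction (in the borderline case $3\parallel x-2$ one instead uses $3\mid D\Rightarrow9\mid D$ and reduces modulo $9$, and one argues analogously modulo $x+2$ and modulo $x^2+2x+4$, the latter dividing $x^d-2^d$ precisely because $3\mid d$). The genuine obstacle is this last step: turning ``$D$ is a perfect square'' into an actual contradiction requires a case analysis driven by the prime factorization of $x-2$ and of the companion quantities, exactly as in \cite{lalit}; the role of the hypotheses ``$d$ odd'' and ``$3\mid d$'' is precisely to guarantee that the divisibilities and congruences needed to import that analysis hold verbatim.
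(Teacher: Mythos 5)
Your proposal is correct and follows essentially the same route as the paper's proof: the parity reduction forcing $x$ odd with $y,z$ even (and the mod $4$ disposal of $x$ even), the deductions $x\equiv 1\pmod 4$ and $x\not\equiv 1\pmod 3$, and the key obstruction that $D=(2y-xz)^2\equiv 12\pmod{x-2}$ forces $3$ to be a quadratic residue modulo the odd prime divisors of $x-2$ are exactly the paper's steps, merely phrased via the discriminant in $y$ rather than the completed square $Y^2-(x^2-4)Z^2=a x^d-b$ and the subcases $x\equiv 1,9,5\pmod{12}$. Your congruences $x^d\equiv x^3\pmod{72}$, $2^d\equiv 8\pmod{72}$ and $D\equiv 12\pmod{x-2}$ do legitimately reduce the remaining quadratic-reciprocity case analysis to the $d=3$ situation of \cite{lalit}, which is also how the paper finishes (its explicit mod-$9$ computation in the subcase $x\equiv 5\pmod{12}$ is precisely your treatment of the borderline case where $3$ exactly divides $x-2$).
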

\begin{proof}
Consider 
\begin{eqnarray} \label{e1}
ax^d-y^2-z^2+xyz-b=0.
\end{eqnarray}
If possible, let $(x,y,z)$ be an integer solution of \eqref{e1}. 
Let us fix $x=\alpha$. 
Then \eqref{e1} can be re-written as,
\begin{eqnarray} \label{e2}
y^2+z^2+b=a{\alpha}^d+\alpha yz.
\end{eqnarray}
We consider the cases of $\alpha$ being even or odd separately.
\begin{case} \label{c1}
If $\alpha$ is even. Then, we write \eqref{e2} as:
\begin{eqnarray} \label{e23}
\left(y-\frac{\alpha z}{2}\right)^2 - \left(\frac{{\alpha}^2}{4}-1\right)z^2=a{\alpha}^d-b
\end{eqnarray}
and set $Y=y-\frac{\alpha z}{2}$, $\beta=\frac{\alpha}{2}$ and $z=Z$. Thus \eqref{e23} becomes, 
\begin{eqnarray} \label{e24}
Y^2-\left(\beta ^2 -1\right)Z^2=a{\alpha}^d-b=2^d\beta^d a-b.
\end{eqnarray}
\begin{itemize}
\item If $\beta$ is even, say $\beta = 2n$ for some integer $n$, then reducing \eqref{e24} modulo $4$ gives,
\begin{eqnarray}
Y^2+Z^2\equiv 3 \pmod 4,
\end{eqnarray}
which is not possible in $\mathbb{Z}/4\mathbb{Z}$.
\item If $\beta$ is odd, then $\beta=2n+1$ for some integer $n$. Reduction of \eqref{e24} modulo $4$ entails, 
\begin{eqnarray}
Y^2 \equiv 3 \pmod 4
\end{eqnarray}
which is impossible.
\end{itemize}
\end{case}
\begin{case}
If $\alpha$ is odd, say, $\alpha=2n+1$ for some integer $n$. Then,
\begin{eqnarray*}
y^2+z^2+b&=&a\alpha^d+\alpha yz\\
y^2+z^2+a2^d-3&=&a\left(2n+1\right)^d+\alpha yz\\
y^2+z^2-\left(2n+1\right)yz&=&a\left(2n+1\right)^d-a2^d+3.
\end{eqnarray*}
Now
\begin{eqnarray*}
y^2+z^2+yz&\equiv& a+3 \pmod 2,\\
\Rightarrow y^2+z^2+yz&\equiv& 0 \pmod 2.
\end{eqnarray*}
Note that $y^2+z^2+yz\equiv a+3 \pmod 2$ has only solution $y\equiv 0\equiv z$ in $\mathbb{Z}/2\mathbb{Z}$, that is, $y$ and $z$ are even. Thus \eqref{e23} becomes 
$$
a\alpha^d-b\equiv 0 \pmod 4.
$$
If we write $a=12l+1$ for some integer $l$, then, 
\begin{eqnarray*}
\alpha^d-\left(a2^d-3\right)&\equiv& 0 \pmod 4,\\
\Rightarrow \alpha^d+3 &\equiv& 0 \pmod 4,\\
\Rightarrow \alpha^d &\equiv& 1 \pmod 4,\\
\Rightarrow \alpha &\equiv& 1 \pmod 4.
\end{eqnarray*}
Let us consider 
\begin{eqnarray*} \label{e7}
\left(y-\frac{\alpha z}{2}\right)^2 - \left(\frac{{\alpha}^2}{4}-1\right)z^2&=&a{\alpha}^d-b,\\
\mbox{i.e.}~~\left(y-\frac{\alpha z}{2}\right)^2 - \left(\alpha^2-4\right)\left(\frac{z}{2}\right)^2&=&a{\alpha}^d-b.
\end{eqnarray*}
Further, we set $Y = y-\frac{\alpha z}{2}$ and $Z = \frac{z}{2}$. Then, 
\begin{eqnarray} \label{e7}
Y^2-\left(\alpha^2-4\right)Z^2=a{\alpha}^d-b
\end{eqnarray}
where $\alpha \equiv 1 \pmod 4$, $a\equiv 1 \pmod {12}$ and $b=a2^d-3$. Three sub cases need to be considered.
\begin{scase}
If $\alpha \equiv 1 \pmod {12}$, write $\alpha = 12l+1$ for some integer $l$. Then,
\begin{eqnarray*}
\alpha \equiv 1 \pmod 3\\
\Rightarrow \alpha+2\equiv 0 \pmod 3.
\end{eqnarray*}
Substituting $\alpha=12l+1$ in \ref{e7}, we get 
\begin{eqnarray*}
Y^2-\left(\left(12l+1\right)^2-4\right)Z^2&=&a{\alpha}^d-b,\\
\Rightarrow Y^2\equiv a\alpha^d-b \pmod 3,\\
\Rightarrow Y^2\equiv a\left(12l+1\right)^d-a2^d+3 \pmod 3,\\
\Rightarrow Y\equiv 1-2^d \pmod 3, \\
\Rightarrow Y^2\equiv 2 \pmod 3.
\end{eqnarray*}
A contradiction as $2$ is not square modulo $3$.
\end{scase}
\begin{scase}\label{sc2}
If $\alpha \equiv 9 \pmod {12}$. Then, there is a prime factor $p\equiv 5$  or $7 \pmod {12}$ of $(\alpha -2)$. Let $p\equiv 5$ or $7$ $\pmod {12}$ be a prime factor of $(\alpha -2)$. Thus,
\begin{eqnarray*}
Y^2\equiv a\alpha^d-b \pmod p.
\end{eqnarray*}
Let $\alpha = pl+2$ for some integer $l$. Then,
\begin{eqnarray*}
Y^2 &\equiv& a\left(pl+2\right)^d-b \pmod p,\\
\Rightarrow Y^2&\equiv& 3 \pmod p.
\end{eqnarray*}
This leads to a contradiction as $3$ is not a quadratic residue modulo $p$.
\end{scase}
\begin{scase} \label{sc3}
When $\alpha \equiv 5 \pmod {12}$, we substitute $\alpha =3k+2$ for some integer $k$ and get,
\begin{eqnarray*}
Y^2-\left(\left(3l+2\right)^2-4\right)Z^2&=&\left(12l+1\right)\left(3k+2\right)-2^d\left(12l+1\right)+3,\\
\Rightarrow Y^2&\equiv& 2-2^d \equiv 0 \pmod 3,\\
\Rightarrow Y&\equiv& 0 \pmod 3.
\end{eqnarray*}
Further, we substitute $Y=3m$ and $\alpha =12n+5$ for some integers $n$ and $m$ in \ref{e7} and arrive onto,
\begin{eqnarray*}
9m^2-\left(12n+3\right)\left(12n+7\right)Z^2&=&a\left(12n+5\right)^d-b,\\
\Rightarrow -\left(n+1\right) Z^2 &\equiv& \sum_{i=0}^{d-1}\left(12n+5\right)^{d-1-i}2^i \pmod 3,\\
\Rightarrow -\left(n+1\right) Z^2 &\equiv& 1 \pmod 3,\\
\Rightarrow n&\equiv& 1 \pmod 3.
\end{eqnarray*}
Hence, $\alpha \equiv 17 \pmod {36}$.

Note that $3$ divides $(\alpha-2)$. Thus there is a prime factor $p\equiv 5$ or $7 \pmod {12}$ of $\frac{(\alpha-2)}{3}$, otherwise it would mean that $\frac{\alpha-2}{3}$ is congruent to $\pm 1$, which is not the case. Therefore, 
\begin{eqnarray*}
\alpha-2\equiv 0 \pmod p.
\end{eqnarray*}
Thus,
\begin{eqnarray*}
Y^2\equiv a\alpha^d-b \pmod p.
\end{eqnarray*}
Substituting $\alpha=pl+2$ for some integer $l$, we have 
\begin{eqnarray*}
Y^2\equiv 3 \pmod p,
\end{eqnarray*}
which contradicts the fact that $3$ is quadratic residue modulo $p$ if $p\equiv \pm 1 \pmod {12}$.
\end{scase}
\end{case}
\end{proof}
\begin{remark}
The result of Sury and Majumdar \cite{sury} follows by substituting $a=1$ and $d=3$ in Theorem \ref{t1}. The particular case $d=3$ in the same theorem deduces the results of Vaishya and Sharma \cite{lalit}.
\end{remark}

By increasing the exponents in the expression for $b$ to 3, we will now examine the Diophantine equation with a little more generality. The potential of a solution in this scenario is described by the following two corollaries, along with a few examples.
\begin{corollary}\label{co1}
 The equation 
$$
ax^d-y^2-z^2+xyz-b=0
$$ 
has no integer solution $(x,y,z)$ with $x$ even for fixed integers $a$ and $b$ such that $a\equiv 1 \pmod {12}$ and $b=2^da-3^r$ with positive odd integers $r$ and $d$ as in Theorem \ref{t1}.
\end{corollary}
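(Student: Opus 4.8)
The plan is to repeat Case \ref{c1} of the proof of Theorem \ref{t1} almost verbatim, since the constant $b$ enters only through the right-hand side $a\alpha^d-b$, and replacing $3$ by $3^r$ with $r$ odd does not change the relevant residue modulo $4$. Suppose, for contradiction, that $(x,y,z)\in\mathbb{Z}^3$ is a solution with $x=\alpha$ even, and write $\alpha=2\beta$. Completing the square in $y^2+z^2+b=a\alpha^d+\alpha yz$ and setting $Y=y-\beta z$, $Z=z$ yields
\begin{equation*}
Y^2-(\beta^2-1)Z^2 \;=\; a\alpha^d-b \;=\; 2^d a(\beta^d-1)+3^r .
\end{equation*}
The only new arithmetic input needed is that $3^r\equiv 3\pmod 4$ for every odd $r$ (immediate from $3^2\equiv 1\pmod 4$), while $2^d a(\beta^d-1)\equiv 0\pmod 4$ because $d\ge 3$; hence the right-hand side is congruent to $3$ modulo $4$ irrespective of the parity of $\beta$.

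Now I split on the parity of $\beta$ exactly as in Case \ref{c1}. If $\beta$ is even then $\beta^2-1\equiv -1\pmod 4$, so reducing the displayed identity modulo $4$ gives $Y^2+Z^2\equiv 3\pmod 4$, which is impossible since squares are $0$ or $1$ modulo $4$. If $\beta$ is odd then $\beta^2-1=4n(n+1)\equiv 0\pmod 4$, so the identity reduces to $Y^2\equiv 3\pmod 4$, again impossible. Either way we reach a contradiction, so no solution with $x$ even exists.

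I do not anticipate any real obstacle here: the argument is a minor variation on the even-$x$ case already handled in Theorem \ref{t1}, and the only things to check are the trivial congruences just mentioned. It is worth flagging, however, which hypotheses make the proof go through. The assumption that $r$ is odd is essential: for even $r$ one would get $Y^2+Z^2\equiv 1\pmod 4$, which has solutions, so the obstruction evaporates. The restriction to $x$ even is also essential, because the odd-$x$ analysis in Theorem \ref{t1} used the exact value $b=2^d a-3$ in the reductions modulo $3$ and modulo a prime $p\mid(\alpha-2)$; those steps break once $3$ is replaced by the higher power $3^r$, so the odd-$x$ situation requires a separate treatment.
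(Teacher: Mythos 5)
Your proof is correct and follows essentially the same route as the paper: both arguments rerun Case \ref{c1} of Theorem \ref{t1}, complete the square to obtain $Y^2-(\beta^2-1)Z^2=a\alpha^d-b$ with $\beta=\alpha/2$, observe that $3^r\equiv 3\pmod 4$ for odd $r$, and derive the impossible congruences $Y^2+Z^2\equiv 3\pmod 4$ (for $\beta$ even) or $Y^2\equiv 3\pmod 4$ (for $\beta$ odd). Your closing remarks on why $r$ odd and $x$ even are essential are accurate and go slightly beyond what the paper states explicitly.
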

\begin{proof}
We follow exactly the same steps as in Case \ref{c1} of Theorem \ref{t1}. Suppose there is a solution with $x=\alpha$ even, then we write \eqref{e2} as:
\begin{eqnarray} \label{e3}
\left(y-\frac{\alpha z}{2}\right)^2 - \left(\frac{{\alpha}^2}{4}-1\right)z^2=a{\alpha}^d-b.
\end{eqnarray}
Let $Y= y-\frac{\alpha z}{2}, \beta=\frac{\alpha}{2}$ and $z=Z$. Then \eqref{e3} can be written as, 
\begin{eqnarray} \label{e4}
Y^2-\left(\beta ^2 -1\right)Z^2=a{\alpha}^d-b=2^d\beta^d a-b.
\end{eqnarray}
\begin{itemize}
\item If $\beta$ is even, say $\beta = 2n$ for some integer $n$, then the reduction modulo $4$ of \eqref{e4} will give,
\begin{eqnarray}
Y^2+Z^2\equiv 3^r \equiv 3 \pmod 4,
\end{eqnarray}
which is not feasible in $\mathbb{Z}/4\mathbb{Z}$.

\item If $\beta$ is odd, say $\beta=2n+1$ for some integer $n$. Then, the reduction modulo $4$ of \eqref{e4} provides, 
\begin{eqnarray}
Y^2 \equiv 3^r \equiv 3 \pmod 4,
\end{eqnarray}
which again is not possible.
\end{itemize}
\end{proof}
The following corollary deals with solutions having $x$, an odd integer:
\begin{corollary}\label{co2}
The equation 
$$
ax^d-y^2-z^2+xyz-b=0
$$ 
has no integer solution in $x$, $y$ and $z$ with $x\equiv 1$ or $9 \pmod {12}$,
for fixed integers $a, b$ such that $a\equiv 1 \pmod {12}$ and $b=2^da-3^r$, for $r$ and $d$ as in Corollary \ref{co1}.
\end{corollary}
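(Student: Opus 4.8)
The plan is to mimic the odd-$\alpha$ analysis (Case 2) of Theorem \ref{t1}, tracking the extra exponent $r$ in $b=2^d a-3^r$. First I would set $x=\alpha$ with $\alpha$ odd and rewrite \eqref{e2}, after completing the square and dividing by $2$ as in \eqref{e7}, in the form $Y^2-\left(\alpha^2-4\right)Z^2=a\alpha^d-b$, where $Y=y-\frac{\alpha z}{2}$, $Z=\frac{z}{2}$; the same parity argument modulo $2$ shows $y,z$ are even so this substitution is legitimate, and reducing modulo $4$ forces $\alpha\equiv 1\pmod 4$ (using $3^r\equiv 3\pmod 4$ since $r$ is odd). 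So the two admissible residues modulo $12$ under the hypothesis $x\equiv 1$ or $9\pmod{12}$ are exactly the sub-cases already named \ref{sc2} (namely $\alpha\equiv 9$) and the first sub-case $\alpha\equiv 1\pmod{12}$.

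For $\alpha\equiv 1\pmod{12}$ I would reduce \eqref{e7} modulo $3$: the coefficient $\alpha^2-4\equiv 0\pmod 3$, so $Y^2\equiv a\alpha^d-b\equiv 1\cdot 1-(2^d-3^r)\equiv 1-2^d\pmod 3$. Since $d$ is odd, $2^d\equiv 2\pmod 3$, giving $Y^2\equiv -1\equiv 2\pmod 3$, which is impossible. This is the verbatim analogue of the first sub-case in Theorem \ref{t1}, the only change being that $3^r\equiv 0\pmod 3$ still kills the $b$-contribution, so nothing is lost. For $\alpha\equiv 9\pmod{12}$ I would argue as in Sub-case \ref{sc2}: then $\alpha-2\equiv 7\pmod{12}$, and $\alpha-2$ must have a prime factor $p\equiv 5$ or $7\pmod{12}$ (a product of primes all $\equiv\pm1\pmod{12}$ would itself be $\equiv\pm1\pmod{12}$, contradicting $\alpha-2\equiv 7$). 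Reducing \eqref{e7} modulo such a $p$: since $p\mid \alpha-2$ and $p\mid \alpha^2-4$, we get $Y^2\equiv a\alpha^d-b\equiv a2^d-b\pmod p$, and $a2^d-b=a2^d-(2^da-3^r)=3^r$. As $r$ is odd, $3^r$ is a quadratic residue modulo $p$ iff $3$ is; but $\left(\tfrac{3}{p}\right)=-1$ for $p\equiv 5,7\pmod{12}$, so $Y^2\equiv 3^r\pmod p$ is impossible.

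The main obstacle, and the reason this is only a partial result (a corollary, not a theorem), is the residue class $\alpha\equiv 5\pmod{12}$, which in Theorem \ref{t1} was handled in Sub-case \ref{sc3} by a delicate secondary descent: one shows $Y\equiv 0\pmod 3$, writes $Y=3m$, divides through again, deduces $n\equiv 1\pmod 3$ hence $\alpha\equiv 17\pmod{36}$, and only then extracts a prime $p\equiv 5,7\pmod{12}$ dividing $\frac{\alpha-2}{3}$. With $b=2^da-3^r$ and $r\ge 3$, the crucial computation $a\alpha^d-b\equiv 3^r\pmod p$ still yields a non-residue, so that final contradiction survives; however the intermediate step forcing $n\equiv 1\pmod 3$ relied on the identity $a\alpha^d-b=a(\alpha^d-2^d)+3^r$ and the geometric-sum expansion $\alpha^d-2^d=(\alpha-2)\sum_{i=0}^{d-1}\alpha^{d-1-i}2^i$, after which one divides by $3$ — and for $r\ge 2$ the term $3^r$ contributes $0$ rather than a unit modulo $3$, so the congruence $-(n+1)Z^2\equiv 1\pmod 3$ that pinned down $n$ collapses to $-(n+1)Z^2\equiv 0\pmod 3$, which no longer determines $\alpha\pmod{36}$. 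Thus the argument genuinely breaks for $\alpha\equiv 5\pmod{12}$ when $r>1$, which is precisely why Corollary \ref{co2} restricts to $x\equiv 1$ or $9\pmod{12}$; I would state the proof for those two classes only, noting explicitly that it is the verbatim specialization of Sub-case \ref{sc2} and the first sub-case of Theorem \ref{t1} with $3$ replaced by $3^r$, the point being that $3^r$ and $3$ have the same quadratic character modulo any odd prime not dividing $3$ because $r$ is odd.
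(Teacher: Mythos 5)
Your proof is correct and follows essentially the same route as the paper, whose entire proof is the one-line assertion that the relevant sub-cases of Theorem \ref{t1} carry over; you have filled in the details properly, tracking the extra factor $3^r$ and using that $r$ odd preserves the quadratic character of $3$ modulo the prime $p\equiv 5,7\pmod{12}$. Note that the paper's proof cites Sub-cases \ref{sc2} and \ref{sc3}, but the residue classes $x\equiv 1$ and $9\pmod{12}$ in the statement actually correspond to the first (unlabelled) sub-case and Sub-case \ref{sc2}, exactly as you identified, and your observation that the secondary descent of Sub-case \ref{sc3} collapses modulo $3$ when $r>1$ is a correct diagnosis of why the class $x\equiv 5\pmod{12}$ must be excluded from the corollary.
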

\begin{proof}
Analogous steps as in Sub-case \ref{sc2} and \ref{sc3} of Theorem \ref{t1} will give the proof.
\end{proof}
\begin{remark} \label{r1}
Corollary \ref{co2} says that, if there is a solution of $ax^d-y^2-z^2+xyz-b=0$ with $a$ and $b$ as described in the Corollary \ref{co2}, then $x$ must be $5$ modulo $12$. 
\end{remark}
We will see some examples.
\begin{eg}\label{eg1}
For $a=25$, $d=3$ and $r=3$. The equation
\begin{eqnarray}\label{e212}
25x^3-y^2-z^2+xyz-173=0
\end{eqnarray}
has no integer solution.
\end{eg}
Example \ref{r1} shows that the equation may not have solution even with $x\equiv 5 \pmod {12}$. However, the next examples tell us the other possibility as well.
\begin{eg}\label{eg2}
If $a=13$, $d=3$ and $r=3$, then
\begin{eqnarray}\label{e212}
13x^3-y^2-z^2+xyz-77=0
\end{eqnarray}
has an integer solution $(5,=18,-102)$.
\end{eg}
\begin{remark} 
The condition that $r$ should be odd is rigid.
\end{remark}
\begin{eg} \label{eg3}
For $a=13$, $d=3$ and $r=2$, the equation
\begin{eqnarray}\label{e212}
13x^3-y^2-z^2+xyz-95=0
\end{eqnarray}
has an integer solution $(2,-10,-7)$. 
\end{eg}  
\section{Hyperelliptic curves}
A hyperelliptic curve $H$ over $\mathbb{Q}$ is a smooth projective curve associated to an affine plane curve given by the equation $y^2=f\left(x\right)$, where $f$ is a square-free polynomial of degree at least $5$. If the degree of $f$ is $2g+1$ or $2g+2$, then the curve has genus $g$. We write $H\left(\mathbb{Q}\right)$ for the set of $\mathbb{Q}$-points on $H$. Determining rational points on hyperelliptic curve is one of the major problems in mathematics. The following is the general result regarding the size of $H\left(\mathbb{Q}\right)$, which was conjectured by Mordell and was proved by Faltings:
\begin{thm} \cite{falt}
If $C$ is a smooth, projective and absolutely irreducible curve over $\mathbb{Q}$ of genus at least $2$, then $C\left(\mathbb{Q}\right)$ is finite.
\end{thm}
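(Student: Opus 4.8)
The theorem quoted here is the Mordell conjecture, proved by Faltings; since no short argument is available, what follows is a sketch of the strategy one would follow rather than a self-contained proof. The plan is to pass from the curve $C$ to its Jacobian $J=\mathrm{Jac}(C)$, an abelian variety of dimension $g\ge 2$, and to deduce finiteness of $C(\mathbb{Q})$ from finiteness statements about abelian varieties. The crucial reduction, due to Parshin and Arakelov, is that the Shafarevich conjecture --- there are only finitely many abelian varieties over $\mathbb{Q}$ of fixed dimension with good reduction outside a fixed finite set $S$ of primes --- implies the Mordell conjecture: to each $P\in C(\mathbb{Q})$ the Parshin covering construction attaches an \'etale-outside-$S'$ cover $C_P\to C$ of bounded degree and genus whose Jacobian has good reduction outside a controlled set depending only on $C$; since only finitely many such abelian varieties exist, and each underlies only finitely many such covers, $C(\mathbb{Q})$ must be finite.

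Next I would reduce the Shafarevich conjecture to two ingredients: first, that within a fixed $\overline{\mathbb{Q}}$-isogeny class there are only finitely many principally polarized abelian varieties over $\mathbb{Q}$ up to isomorphism; second, a Northcott-type finiteness for abelian varieties of bounded Faltings (stable) height and bounded dimension, which comes from the quasi-projectivity of a moduli space $\mathcal{A}_g$ with level structure together with the arithmetic of Siegel modular forms and the Hodge bundle. The engine for the first ingredient is Faltings' inequality bounding the change of the stable height under an isogeny, combined with Raynaud's theorem on finite flat group schemes of type $(p,\dots,p)$ and Tate's results on $p$-divisible groups, which together control the finitely many primes at which the height can jump. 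One then upgrades this to the Tate conjecture for abelian varieties over number fields --- the map $\mathrm{Hom}(A,B)\otimes\mathbb{Z}_\ell\to\mathrm{Hom}_{G_{\mathbb{Q}}}(T_\ell A,T_\ell B)$ is an isomorphism and $T_\ell A\otimes\mathbb{Q}_\ell$ is semisimple --- again by a height argument: a failure of surjectivity would produce an infinite sequence of pairwise non-isomorphic abelian varieties of bounded height inside one isogeny class, contradicting the finiteness just established. Semisimplicity plus the isogeny theorem then yield finiteness up to isogeny, and hence Shafarevich.

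The main obstacle is precisely this circle of results around the Faltings height: proving the height inequality under isogeny and establishing that moduli of abelian varieties form a variety over which a Northcott property holds. This demands Arakelov and arithmetic intersection theory on $\mathcal{A}_g$ and its minimal compactification, a careful analysis of semistable reduction and of the metrized Hodge bundle near the boundary, and Raynaud's group-scheme theorem --- none of which is routine. For this reason, if a more self-contained treatment were demanded, the route I would actually write out is the later proof of Vojta, simplified by Faltings and Bombieri: embed $C$ in $J$ via an Abel--Jacobi map, apply Mumford's gap principle to see that rational points are already very sparse with respect to the canonical height, and then run a Thue--Siegel--Roth--Schmidt-type argument directly on $J$ using an auxiliary section of large degree on a product of copies of $\mathcal{C}$, Arakelov intersection theory on the arithmetic surface, and Faltings' product theorem to control the zero locus of that section, arriving at a contradiction if $C(\mathbb{Q})$ were infinite.
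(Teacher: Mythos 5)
The paper does not prove this statement at all: it is quoted verbatim from Faltings \cite{falt} as an imported black box (and is in fact never used in the paper's arguments --- the torsion-freeness results rely on Grant's Nagell--Lutz analogue \cite{david}, not on Mordell--Faltings). So there is no in-paper proof to compare against. Your outline is an accurate and well-organized description of the two standard routes: Faltings' original argument (Parshin's construction reducing Mordell to Shafarevich, the isogeny and Tate conjectures via the behaviour of the Faltings height under isogeny, Raynaud's theorem on finite flat group schemes, and the Northcott property for the stable height on $\mathcal{A}_g$) and the later Vojta--Faltings--Bombieri proof via Mumford's gap principle and diophantine approximation on the Jacobian. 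You are right that no short self-contained proof exists, and you are candid that what you have written is a strategy sketch rather than a proof; as such it cannot be faulted for a ``gap'' in the usual sense, but it should be understood --- as the paper itself implicitly does by citing rather than proving --- that the correct way to handle this result in the present context is simply to cite \cite{falt}.
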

We may thus, at least theoretically, write down the finite set $C\left(\mathbb{Q}\right)$. It is still a significant unresolved problem to perform this practically for a given curve. 

Given a hyperelliptic curve $H$, we can define the {\it height} (classical) function to be the maximum of absolute values of the coefficients. The Northcott property tells us that there are finitely many equations with bounded height. Thus, one may talk about the density and averages. In this regard, Bhargava \cite{manjul} has proved that most of the hyperelliptic curve over $\mathbb{Q}$ has no rational point. So, most of the times calculating $H\left(\mathbb{Q}\right)$ means proving $H\left(\mathbb{Q}\right)=\phi$. 

In this section, we construct hyperelliptic curves corresponding to the  equation $ax^d-y^2-z^2+xyz-b=0$ with $a$ and $b$ as mentioned in Theorem \ref{t1}. Then, we prove that $H\left(\mathbb{Q}\right)=\phi$ (corroborating Bhargava \cite{manjul}). The main ingredient to prove this is the following Nagell-Lutz type theorem (Theorem 3, \cite{david}) proved by D. Grant.

\begin{thm}\cite{david}
    Let $C$ be a nonsingular projective curve of genus $g\geq 1$ given by $y^2=x^{2g+1}+b_1x^{2g}+\cdots+b_{2g}x+b_{2g+1}$, where $b_i\in \mathbb{Z}$. Suppose $$\psi : C\left(\mathbb{Q}\right)\rightarrow J\left(\mathbb{Q}\right)$$ be the Abel-Jacobi map, defined by $\psi\left(p\right)=\left[p-\infty\right]$, where $J\left(\mathbb{Q}\right)$ is the Jacobian variety. If $p=\left(x,y\right)\in C\left(\mathbb{Q}\right)\setminus \{\infty\}$ and $\psi \left(p\right)\in J\left(\mathbb{Q}\right)_{\text{tors}}$, then, $x,y\in \mathbb{Z}$ and either $y=0$ or $y^2$ divides discriminant of the polynomial $x^{2g+1}+b_1x^{2g}+\cdots+b_{2g}x+b_{2g+1}$.
\end{thm}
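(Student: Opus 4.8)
The plan is to transplant the two classical ingredients behind the elliptic Nagell--Lutz theorem to the Jacobian $J$ of $C$: \emph{(i)} every torsion point of $J(\mathbb{Q})$ has integral Mumford coordinates, and \emph{(ii)} if such a torsion point has the special form $[p-\infty]$ with $p=(x_0,y_0)$ and $y_0\neq 0$, then $y_0^{2}\mid\operatorname{disc}(f)$, where $f(x)=x^{2g+1}+b_1x^{2g}+\cdots+b_{2g+1}$. Write $T=\psi(p)=[p-\infty]$ and note first that $T\neq 0$: the equality $[p-\infty]=0$ would produce a function on $C$ with divisor $p-\infty$, hence a degree-one map $C\to\mathbb{P}^{1}$, contradicting $g\geq 1$. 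Since $\psi$ is compatible with the group law and with reduction modulo primes, one may argue $\ell$-adically on the N\'eron model of $J$.

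For \emph{(i)}, let $S\in J(\mathbb{Q})_{\mathrm{tors}}$ and suppose a prime $\ell$ appears in the denominator of a Mumford coordinate of $S$. Then $S$ lies in the kernel of reduction modulo $\ell$, which is isomorphic to the formal group $\widehat{J}(\ell\mathbb{Z}_\ell)$; for $\ell\geq 3$ multiplication by $\ell$ strictly raises the valuation on $\widehat{J}(\ell\mathbb{Z}_\ell)$ (the linear term gains one in valuation, the remaining terms come from Frobenius and have valuation multiplied by at least $\ell$), so $\widehat{J}(\ell\mathbb{Z}_\ell)$ is torsion-free and $S$ cannot be nonzero torsion. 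For $\ell=2$, where $\widehat{J}(2\mathbb{Z}_2)$ may carry $2$-torsion, one runs an induction on the order: among $S,[2]S,[4]S,\dots$ choose the last term with a $2$ in a denominator; its double already has integral coordinates, and Cantor's doubling formulas then bound the $2$-adic denominators of that term, a contradiction. Applied to $S=T$, whose Mumford pair is $(x-x_0,\,y_0)$, this gives $x_0,y_0\in\mathbb{Z}$; concretely, $v_\ell(x_0)<0$ forces $v_\ell(x_0)=-2e$, $v_\ell(y_0)=-(2g+1)e$ with $e\geq1$, and in the local parameter $u=x^{g}/y$ at $\infty$ one has $v_\ell(u)=e\geq1$, i.e. $p$ reduces to $\infty$ and $T$ enters $\ker(\mathrm{red}_\ell)$.

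For \emph{(ii)}, assume $x_0,y_0\in\mathbb{Z}$ and $y_0\neq 0$, and compute $[2]T$ from the pair $(x-x_0,y_0)$ via Cantor composition-and-reduction: the half-reduced double is $\bigl((x-x_0)^{2},\ y_0+\tfrac{f'(x_0)}{2y_0}(x-x_0)\bigr)$, subsequently brought to canonical form. Since $[2]T$ is torsion, part \emph{(i)} makes its coordinates integral, and matching against these formulas yields first $2y_0\mid f'(x_0)$, hence $y_0\mid f'(x_0)$, and then $y_0^{2}\mid\operatorname{disc}(f)$ by feeding this, together with $y_0^{2}=f(x_0)$, into a B\'ezout-type identity $A(x)f(x)+B(x)\Phi(x)=\pm\operatorname{disc}(f)^{m}$ in $\mathbb{Z}[x]$, in which $\Phi$ is the numerator surfacing after reduction and $\operatorname{Res}(f,\Phi)$ is the relevant power of $\operatorname{disc}(f)$ --- the hyperelliptic stand-in for the $2$-division-polynomial identity of the elliptic case. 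Finally, $[2]T=0$ is impossible unless $y_0=0$: it would force $2p\sim2\infty$, but $L(2\infty)=\langle 1,x\rangle$ on a hyperelliptic curve, so a function with divisor $2p-2\infty$ must be a scalar multiple of $x-x_0$, which requires $p$ to be a ramification point of $x\colon C\to\mathbb{P}^{1}$, i.e. $y_0=0$ --- the other alternative in the statement.

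I expect \emph{(ii)} to be the genuine obstacle. When $g=1$ one has $J=C$, so doubling stays on the curve and the ordinary duplication formula settles everything; for $g\geq2$ the class $[2p-2\infty]$ generically lies off the image of $C$ in $J$, and one must manipulate length-two Mumford representatives through Cantor's algorithm. The delicate point is bookkeeping: tracking precisely which powers of $y_0$, and of the primes dividing $\operatorname{disc}(f)$, survive in the denominators after reduction, i.e. isolating the correct resultant/B\'ezout identity that promotes $y_0\mid f'(x_0)$ to $y_0^{2}\mid\operatorname{disc}(f)$ while also sealing the residual prime-$2$ case from step \emph{(i)}.
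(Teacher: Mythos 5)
First, a point of reference: the paper does not prove this statement at all --- it is quoted verbatim as Theorem 3 of Grant \cite{david} and used as a black box, so there is no in-paper proof to compare your attempt against. Judged on its own terms, your outline correctly identifies the two classical ingredients (integrality of torsion via the formal group of the N\'eron model, and a discriminant divisibility extracted from duplication), and the individual computations you do write down are correct: the valuations $v_\ell(x_0)=-2e$, $v_\ell(y_0)=-(2g+1)e$, the parameter $x^{g}/y$ at infinity, the half-reduced double $\bigl((x-x_0)^2,\,y_0+\tfrac{f'(x_0)}{2y_0}(x-x_0)\bigr)$, and the Riemann--Roch argument that $[2p-2\infty]=0$ forces $y_0=0$.

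However, part \emph{(ii)} as written is not a proof but a restatement of the difficulty. Two steps are asserted without justification and neither is routine. First, integrality of the Mumford coordinates of $[2]T$ does not obviously force $2y_0\mid f'(x_0)$: Cantor reduction replaces $\bigl((x-x_0)^2,v(x)\bigr)$ by $\bigl(\tfrac{f-v^2}{(x-x_0)^2},\dots\bigr)$ followed by further reduction steps when $g\geq 2$, and denominators present in $v$ can a priori cancel in the reduced output; this must be ruled out. Second, and more seriously, $y_0\mid f'(x_0)$ together with $y_0^2=f(x_0)$ does \emph{not} imply $y_0^2\mid\operatorname{disc}(f)$: the standard identity $Af+Bf'=\operatorname{disc}(f)$ in $\mathbb{Z}[x]$ only yields $y_0\mid\operatorname{disc}(f)$, already for $g=1$. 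The strong divisibility needs the specific resultant identity between $f$ and the actual numerator $\Phi$ of the duplication map together with the divisibility $f(x_0)\mid\Phi(x_0)$ extracted from integrality of $[2]T$; for $g\geq 2$, writing down $\Phi$ and proving that $\operatorname{Res}(f,\Phi)$ is the right power of $\operatorname{disc}(f)$ is precisely the substance of Grant's theorem, and your proposal explicitly defers it (``the delicate point is bookkeeping''). The $\ell=2$ case of \emph{(i)} is likewise only gestured at; a cleaner route is to note that the only torsion the formal group $\widehat{J}(2\mathbb{Z}_2)$ can carry has order $2$, and $[p-\infty]$ of order $2$ forces $y_0=0$, which is incompatible with $v_2(y_0)<0$. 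As it stands the proposal is a sound plan, not a proof.
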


 For fixed $m$ we define hyperelliptic curves, 
\begin{eqnarray*}
H_{m,a,b}: y^2-mxy = ax^d-m^2-b.
\end{eqnarray*}
\begin{itemize}
\item Suppose $m$ is even. Then write \eqref{e1} as:
\begin{eqnarray} \label{e31}
\left(y-\frac{mx}{2}\right)^2-\frac{m^2x^2}{4}=ax^d-m^2-b.
\end{eqnarray}
Multiplying \eqref{e31} by $a^{d-1}$ throughout, and using the fact that $d$ is odd and divisible by $3$, we have,
\begin{eqnarray} \label{e32}
\left(\left(y-\frac{mx}{2}\right)a^{\frac{d-1}{2}}\right)^2-a^{d-1}\frac{m^2x^2}{4}=\left(ax\right)^d-m^2a^{d-1}-ba^{d-1}.
\end{eqnarray}
We get the following hyperelliptic curve by substituting $\left(\left(y-\frac{mx}{2}\right)a^{\frac{d-1}{2}}\right)=Y$ and $ax=X$,  
\begin{eqnarray} \label{ee}
H_{m,a,b}^e:  Y^2-a^{d-3}\frac{m^2X^2}{4}=X^d-m^2a^{d-1}-ba^{d-1}.
\end{eqnarray}
\item Now if  $m$ is odd, multiply \eqref{e32} by $4^d$ throughout to get
\begin{eqnarray*}
\left(\left(y-\frac{mx}{2}\right)a^{\frac{d-1}{2}}2^d\right)^2-\left(4a\right)^{d-1}{m^2x^2}=\left(4ax\right)^d-m^2a^{d-1}4^d-ba^{d-1}4^d.
\end{eqnarray*}
Finally substitute $ \left(\left(y-\frac{mx}{2}\right)a^{\frac{d-1}{2}}2^d\right)=Y$ and $4ax=X$, to get 
\begin{eqnarray} \label{eo}
H_{m,a,b}^o:  Y^2-\left(4a\right)^{d-3}{m^2X^2}=X^d-m^2a^{d-1}4^d-ba^{d-1}4^d.
\end{eqnarray}
\end{itemize}
Let,
\begin{eqnarray}
H_{m,a,b} = \begin{cases}
H_{m,a,b}^e & {\rm ~if~} m\hspace{0.1cm} \text{is even} \\
H_{m,a,b}^o & {\rm ~if~} m\hspace{0.1cm} \text{is odd},
\end{cases}
\end{eqnarray}
be the hyperelliptic curves.
\begin{thm} \label{t2}
Let $a$ and $b$ be as defined in Theorem \ref{t1}. For any $m\in \mathbb{N}$, the hyperelliptic curve $H_{m,a,b}$ has torsion-free Mordell-Weil group over $\mathbb{Q}$.
\end{thm}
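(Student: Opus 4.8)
The plan is to put $H_{m,a,b}$ into the shape required by the Nagell--Lutz type theorem of Grant \cite{david}, apply that theorem to a hypothetical nontrivial rational torsion point, and then read the resulting integral point as an integer solution of the Fruit equation, which Theorem~\ref{t1} forbids. The substitutions already used to pass from \eqref{e1} to \eqref{ee} and \eqref{eo} exhibit, over $\mathbb{Q}$, an isomorphism of $H_{m,a,b}$ with a model $Y^{2}=f(X)$ in which $f$ is monic of odd degree $d$ with integer coefficients: for $m$ even one has $f(X)=X^{d}+\tfrac14 m^{2}a^{d-3}X^{2}-a^{d-1}(m^{2}+b)$, and for $m$ odd $f(X)=X^{d}+m^{2}(4a)^{d-3}X^{2}-a^{d-1}4^{d}(m^{2}+b)$. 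Since $d$ is odd with $d\ge 3$, this model has genus $g=(d-1)/2\ge 1$ and a single rational point $\infty$ at infinity (when $d=3$ it is an elliptic curve and Grant's theorem reduces to classical Nagell--Lutz). To apply Grant's theorem I would first check nonsingularity: the constant term $-a^{d-1}(m^{2}+b)$ is nonzero because $m^{2}+b=m^{2}+2^{d}a-3\ge 5$, so $X=0$ is not a root of $f$, and since $f'(X)=X\bigl(dX^{d-2}+2c\bigr)$, where $c$ is the coefficient of $X^{2}$ in $f$, a short discriminant (resultant) computation rules out a repeated root. Hence $H_{m,a,b}$ is a nonsingular curve in Grant's normal form.

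Now suppose, for contradiction, that the Jacobian $J$ of $H_{m,a,b}$ has $J(\mathbb{Q})_{\mathrm{tors}}\neq 0$, and take $P\in H_{m,a,b}(\mathbb{Q})\setminus\{\infty\}$ with $\psi(P)\in J(\mathbb{Q})_{\mathrm{tors}}\setminus\{0\}$; let $(X_{0},Y_{0})$ be the coordinates of $P$ on the model $Y^{2}=f(X)$. Grant's theorem forces $X_{0},Y_{0}\in\mathbb{Z}$ and, in addition, $Y_{0}=0$ or $Y_{0}^{2}\mid\mathrm{disc}(f)$. Inverting the substitutions in \eqref{ee} (resp.\ \eqref{eo}) sends $(X_{0},Y_{0})$ to $x_{0}=X_{0}/a$ (resp.\ $x_{0}=X_{0}/(4a)$) together with the matching value $y_{0}$, and the identity $Y_{0}^{2}=f(X_{0})$ is precisely equation \eqref{e1} evaluated at $(x,y,z)=(x_{0},y_{0},m)$.

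The decisive point — and the step I expect to be the \emph{main obstacle} — is to show that $x_{0}$ and $y_{0}$ are genuinely integers, i.e.\ that the powers of $a$ and of $2$ cleared while forming \eqref{ee}/\eqref{eo} actually divide $X_{0}$ and $Y_{0}$. This is where the standing hypotheses $a\equiv 1\pmod{12}$, $b=2^{d}a-3$ and $3\mid d$ must re-enter, through reductions of $Y_{0}^{2}=f(X_{0})$ modulo suitable prime powers (and, if needed, the divisibility $Y_{0}^{2}\mid\mathrm{disc}(f)$), carried out in the same spirit as the case split in the proof of Theorem~\ref{t1}. Once $(x_{0},y_{0},m)\in\mathbb{Z}^{3}$ is secured, it is an integer solution of $ax^{d}-y^{2}-z^{2}+xyz-b=0$, contradicting Theorem~\ref{t1}; therefore $J(\mathbb{Q})_{\mathrm{tors}}=0$. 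Equivalently, one may bypass the descent and instead run the mod~$3$, mod~$4$ and mod~$p$ arguments of Theorem~\ref{t1} directly on $Y_{0}^{2}=f(X_{0})$ with $X_{0},Y_{0}\in\mathbb{Z}$, showing that the normalized curve has no affine integral point at all, which by Grant's theorem already yields torsion-freeness.
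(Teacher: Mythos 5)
Your strategy is the same as the paper's: rewrite $H_{m,a,b}$ as a monic odd-degree model $Y^2=f(X)$, invoke Grant's Nagell--Lutz analogue \cite{david} to force a hypothetical nontrivial torsion point to be integral, and then contradict Theorem \ref{t1}. However, the step you yourself single out as the ``main obstacle'' --- showing that an integral point $(X_0,Y_0)$ on the normalized model pulls back to an \emph{integer} solution $(x_0,y_0,m)$ of \eqref{e1}, i.e.\ that $a\mid X_0$ and that $a^{(d-1)/2}$ (resp.\ the extra powers of $2$ in the odd-$m$ case) divide the relevant quantities --- is a genuine gap, and you do not close it: you only say the hypotheses ``must re-enter'' through congruence arguments. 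The paper does not close it either; its proof asserts that an integer solution $(X_0,Y_0)$ of \eqref{ee} yields the solution $\left(aX_0,\left(Y_0-\frac{mX_0}{2}\right)a^{\frac{d-1}{2}},m\right)$ of \eqref{e1}, which is the forward substitution applied to the wrong variables --- the true inverse is $x_0=X_0/a$ and $y_0=Y_0a^{-\frac{d-1}{2}}+\frac{mX_0}{2a}$, which is a priori only rational, whereas Theorem \ref{t1} excludes only integer solutions. Your alternative suggestion --- run the mod $3$, mod $4$ and mod $p$ arguments of Theorem \ref{t1} directly on $Y_0^2=f(X_0)$ with $X_0,Y_0\in\mathbb{Z}$ --- is the right way to repair this, but it has to be actually carried out; the coefficients of $f$ involve $a^{d-3}$, $a^{d-1}$ and $4^d$, so the case analysis of Theorem \ref{t1} does not transfer verbatim.

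Two further points apply equally to your write-up and to the paper. First, Grant's theorem requires $f$ to be squarefree (the curve nonsingular); you at least note that a discriminant computation is needed, but neither you nor the paper performs it. Second, for genus $g\geq 2$ (i.e.\ $d\geq 9$ under the standing hypotheses) a nonzero class in $J(\mathbb{Q})_{\mathrm{tors}}$ need not be of the form $[P-\infty]$ for a rational point $P$ on the curve, so your opening move ``take $P$ with $\psi(P)$ a nonzero torsion class'' presumes such a $P$ exists. What the argument actually establishes is that the Abel--Jacobi image of $H_{m,a,b}(\mathbb{Q})\setminus\{\infty\}$ meets the torsion subgroup trivially; if ``torsion-free Mordell--Weil group'' is to mean $J(\mathbb{Q})_{\mathrm{tors}}=0$, an additional ingredient is required.
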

\begin{proof}
Let $a$ and $b$ be fixed positive integers with $a\equiv 1 \pmod {12}$ and $b=2^da-3$. 
\begin{itemize}
\item For any even integer $m$, consider the hyperelliptic curve 
\begin{equation} \label{e36}
H_{m,a,b}^e:  Y^2-a^{d-3}\frac{m^2X^2}{4}=X^d-m^2a^{d-1}-ba^{d-1}.
\end{equation}
By Theorem $3$ of \cite{david}, if \eqref{e36} has an integer solution $\left(X_0, Y_0\right)$,\\ then $\left(aX_0, \left(\left(Y_0-\frac{mX_0}{2}\right)a^{\frac{d-1}{2}}\right), m\right)$ is a solution of  \eqref{e1}. However, in Theorem \ref{t1} we have proved that it has no integer solutions.

\item For an odd integer $m$, consider the hyperelliptic curve 
\begin{equation} \label{e37}
H_{m,a,b}^o:  Y^2-\left(4a\right)^{d-3}{m^2X^2}=X^d-m^2a^{d-1}4^d-ba^{d-1}4^d.
\end{equation}
Suppose \eqref{e37} has a solution $\left(X_0, Y_0\right)$, then $\left(4aX_0, \left(\left(Y_0-\frac{mX_0}{2}\right) a^{\frac{d-1}{2}}2^d\right), m\right)$ is a solution of \eqref{e1}, which is a contradiction.
\end{itemize}
\end{proof}

\section{Numerical examples}
In this section we give some numerical examples corroborating our results in Corollary \ref{co2} and Remark \ref{r1}.
\begin{center}
\begin{tabular}{||c c c c c||} 
 \hline
 $a$ & $d$ & $r$ & Equation & Solution\\ [1ex] 
 \hline\hline
1 & 3 & 3 & $x^3-y^2-z^2+xyz+19=0$ & $\left(5,0,-12\right)$\\
 \hline
 1 & 3 & 5 & $x^3-y^2-z^2+xyz+235=0$ & $\left(29,12,-60\right)$\\
 \hline
 1 & 3 & 7 & $x^3-y^2-z^2+xyz+2179=0$ & $\left(5,0,-48\right)$\\
 \hline
 1 & 3 & 9 & $x^3-y^2-z^2+xyz+19675=0$ & $\left(-31,12,-30\right)$\\
 \hline
 13 & 3 & 3 & $13x^3-y^2-z^2+xyz-77=0$ & $\left(5,-18,-102\right)$\\ 
 \hline
 13 & 3 & 5 & $13x^3-y^2-z^2+xyz+139=0$ & $\left(5,0,-42\right)$\\
 \hline
 13 & 3 & 7 & $13x^3-y^2-z^2+xyz+2083=0$ & ?\\
 \hline
 25 & 3 & 3 & $25x^3-y^2-z^2+xyz-173=0$ & $\left(5,0,-42\right)$\\ [1ex]
 
 \hline
\end{tabular}
\end{center}
\section*{Acknowledgement} This work is done during the  first author's  visit to Institute of Mathematical Sciences (IMSc), Chennai, and he is grateful to the Institute for the hospitality and the wonderful working ambience. Both the authors are grateful to Kerala School of Mathematics(KSoM), Kozhikode, for it's support and wonderful ambience.

\end{document}